\theoremstyle{definition}
\newtheorem{thm}{Theorem}[section]
\crefname{thm}{Theorem}{Theorems}
\newtheorem{cor}[thm]{Corollary}
\crefname{prop}{Proposition}{Propositions}
\newtheorem{lem}[thm]{Lemma}
\crefname{lem}{Lemma}{Lemmas}
\newtheorem{clm}[thm]{Claim}
\crefname{defn}{Definition}{Definitions}
\newtheorem{rmk}[thm]{Remark}
\newtheorem*{ack*}{Acknowledgements}
\newcommand{\co}{\operatorname{co}}
\title{The sharp doubling threshold for approximate convexity.}
\author{Peter van Hintum\thanks{New College, University of Oxford, UK. email: peter.vanhintum@new.ox.ac.uk}, Peter Keevash\thanks{Mathematical Institute, University of Oxford, UK. Supported by ERC Advanced Grant 883810.}}
\begin{document}
\maketitle
\begin{abstract}
We show for $A,B\subset\mathbb{R}^d$ of equal volume and $t\in (0,1/2]$ that if $|tA+(1-t)B|< (1+t^d)|A|$, then (up to translation) $|\co(A\cup B)|/|A|$ is bounded. This establishes the sharp threshold for Figalli and Jerison's quantative stability of the Brunn-Minkowski inequality. We additionally establish a similar sharp threshold for iterated sumsets.
\end{abstract}

\section{Introduction}
The Brunn-Minkowski inequality asserts that for sets $A,B\subset\mathbb{R}^d$ with equal volume and $t\in(0,1/2]$, we have
$$|tA+(1-t)B|\geq |A|,$$
with equality exactly if $A=B$ is a convex set. The stability of this inequality has sparked a rich body of research (e.g. \cite{figalli2009refined,figalli2010mass,christ2012near,eldan2014dimensionality,figalli2015quantitative,figalli2017quantitative,Euclidean,figalli2021quantitative,van2021sharp,van2020sharp,planarBM}). These results variously control (up to translation) $|A\triangle B|$, $|\co(A)\setminus A|$, and $|\co(A\cup B)\setminus A|$ in terms of the parameter:
$$\delta_t(A,B):=\frac{|tA+(1-t)B|}{|A|}-1\geq 0.$$

In \cite{figalli2017quantitative}, Figalli and Jerison showed that there exist $a_{d,t},c_{d,t},\Delta_{d,t}>0$, so that if $\delta=\delta_t(A,B)\leq \Delta_{d,t}$, then (up to translation)
$$|\co(A\cup B)\setminus A|\leq c_{d,t} \delta^{a_{d,t}}|A|.$$
For various particular classes of sets $A,B$ the optimal values of $a_{d,t}$ and $c_{d,t}$ have been determined. The optimal values for general $A,B\subset\mathbb{R}^d$ are expected to be $a_{d,t}=1/2$ and $c_{d,t}=O(t^{-1/2})$, though for $A=B$ the stronger result with $a_{d,t}=1$ and $c_{d,t}=t^{-1} \exp(O(d\log(d)))$ has been established \cite{van2021sharp}.

In this paper we determine the optimal value of $\Delta_{d,t}$ for this result. We establish this bound both for general $A,B\subset\mathbb{R}^d$ and for iterated sumsets. Both can be extended to quantative stability results for all doublings below this threshold.
\begin{thm}\label{distinctsets}
For all $d\in\mathbb{N},t\in(0,1/2]$, there are $C_{d,t}>0$ so that if $A,B\subset \mathbb{R}^d$ of the same volume have $|tA+(1-t)B|< (1+t^d)|A|$, then (after possibly translating) $|\co(A\cup B)|\leq C_{d,t}|A|$.
\end{thm}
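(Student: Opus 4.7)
Plan: Normalize $|A|=|B|=1$ and write $\delta:=|tA+(1-t)B|-1\in[0,t^d)$. I would combine two ingredients: known quantitative Brunn--Minkowski stability in the small-doubling regime, and a rigidity principle obtained by applying Brunn--Minkowski to disjoint decompositions of $A$ or $B$. The rigidity should rule out ``far-flung'' mass in either set, at which point the problem reduces to the small-doubling case.

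First, for $\delta$ below a threshold $\Delta^{*}_{d,t}>0$ supplied by the Figalli--Jerison stability result \cite{figalli2017quantitative}, we already have $|\co(A\cup B)|\leq (1+C\delta^{a})|A|$ after translation, and are done. For $\Delta^{*}_{d,t}\leq \delta<t^d$, the key lemma would be: if $B=B_1\sqcup B_2$ is a measurable decomposition with $|B_2|=\mu\in(0,1)$ such that the Minkowski sums $tA+(1-t)B_1$ and $tA+(1-t)B_2$ are essentially disjoint, then Brunn--Minkowski applied to each piece gives
\[
|tA+(1-t)B|\geq (t+(1-t)(1-\mu)^{1/d})^d+(t+(1-t)\mu^{1/d})^d \geq 1+t^d,
\]
with strict inequality for $d\geq 2$ (the right side has infinite right-derivative at $\mu=0$ and matches $1+t^d$ at both endpoints), and with equality for $d=1$ which still contradicts the strict hypothesis $\delta<t^d$. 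The symmetric statement applies to decompositions of $A$.

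Reinterpreting geometrically: the sumsets $tA+(1-t)B_i$ are disjoint along a hyperplane with normal $v$ precisely when the projected gap in $B$ exceeds $\tfrac{t}{1-t}\,\mathrm{width}(A,v)$. So the hypothesis forces $B$ (and by symmetry $A$) to have no gap of that size in any direction. After translating $B$ so that its bulk overlaps $A$, any residual off-bulk piece of $B$ is trapped within $O_{t}(\mathrm{diam}\,A)$ of the rest, and similarly for $A$. Iteratively peeling off such pieces and checking that $\delta$ decreases should eventually bring us below $\Delta^{*}_{d,t}$, at which point the base case closes the argument.

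The main obstacle is converting the local ``no large gap in any direction'' condition into the required global volume bound on $\co(A\cup B)$: a priori many small gaps across many directions could accumulate into large total diameter without any single one being large. In dimension $d\geq 2$ the $O(\mu^{1/d})$ surplus in the key lemma above provides room to track multi-scale and multi-direction accumulation, but the bookkeeping is delicate. The $A=B$ rigidity result of \cite{van2021sharp} cited in the introduction, applied to a symmetric object like $A\cup(B+\tau)$ for an optimally chosen translation $\tau$, may be useful for the final cleanup step.
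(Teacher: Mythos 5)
Your approach is genuinely different from the paper's, and your key lemma is correct: if $B=B_1\sqcup B_2$ with $|B_2|=\mu$ and the two sumsets $tA+(1-t)B_i$ are essentially disjoint, then indeed
\[
|tA+(1-t)B|\geq \bigl(t+(1-t)(1-\mu)^{1/d}\bigr)^d+\bigl(t+(1-t)\mu^{1/d}\bigr)^d\geq 1+t^d,
\]
and the second inequality holds because $\mu\mapsto (t+(1-t)\mu^{1/d})^d$ is concave, so the symmetrized sum is concave in $\mu$ with equal values $1+t^d$ at $\mu=0,1$, hence minimized at the endpoints (strictly for $d\geq 2$). This cleanly explains the $1+t^d$ threshold. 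The paper instead hits $1+t^d$ via a one-dimensional lemma (\Cref{distinctsetlem}): it shows $|(X+Y)\cup(\{0,1\}+Z)|\geq\min\{1,|X|+|Y|\}+|Z|$, and then uses an optimal transport pairing of the fibres of $A$ and $B$ together with a positioning lemma (\Cref{positioning}) so that the constant $t^d$ falls out of a fibre-by-fibre integration. The paper also handles a complementary case via Pl\"unnecke's inequality when both sets have long fibres in every coordinate direction. So both approaches ``see'' the same sharp constant, but the mechanisms are quite different: yours is a global Brunn--Minkowski decomposition estimate, the paper's is a local fibre-level combinatorial inequality combined with transport.

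However, your proposal does not actually close the argument, and the gap you yourself flag is genuine and not merely a matter of bookkeeping. The implication ``no gap of size $\frac{t}{1-t}\operatorname{width}(A,v)$ in any direction $v$'' does \emph{not} bound $|\co(A\cup B)|/|A|$: a single long thin slab has no gaps in any direction but unbounded aspect ratio, and more importantly the required gap size is proportional to $\operatorname{width}(A,v)$, which is exactly the quantity you are trying to control. There is a chicken-and-egg problem: to peel a far piece off $B$ you need the width of $A$ to be small, and vice versa, and neither is given. The ``iterative peeling'' step also lacks a mechanism: translating a far component of $B$ closer to the bulk changes the sumset in a way that need not decrease $\delta$ (a translated piece can newly overlap parts of $tA+(1-t)B_1$, or newly miss parts it previously covered), and there is no termination argument. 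Finally, invoking the $A=B$ stability result of \cite{van2021sharp} on $A\cup(B+\tau)$ requires a bound on $\delta_t(A\cup(B+\tau),A\cup(B+\tau))$ in terms of $\delta_t(A,B)$, which is not established and is not straightforward. In short: correct key lemma, correct identification of the obstacle, but no resolution of it; the paper resolves the analogous difficulty by the positioning lemma, which \emph{constructs} far-apart points in each coordinate direction assuming $|\co(A\cup B)|$ is large, thereby inverting the logic and sidestepping the chicken-and-egg issue.
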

In fact, we can choose $C_{d,t}=t^{-O(d^2)}$.
The second theorem determines this threshold for iterated sumsets. For $X\subset\mathbb{R}^d$ and $k\in\mathbb{N}$, we write $k\cdot X:=\underbrace{X+\cdots+X}_{k \text{ terms}}$.

\begin{thm}\label{iteratedsumsets}
For all $d,k\in\mathbb{N}$, there are $C_{d,k}>0$ so that if $A\subset \mathbb{R}^d$ satisfies $|k\cdot A|< (1^d+\dots+k^d)|A|$, then $|\co(A)|\leq C_{d,k}|A|$.
\end{thm}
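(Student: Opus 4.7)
The plan is to induct on $k$, using \cref{distinctsets} as the main input. The base case $k = 2$ follows directly from \cref{distinctsets} applied with $A = B$ and $t = 1/2$: the condition $|A + A| < (1 + 2^d)|A|$ exactly matches the hypothesis $|tA + (1-t)B| < (1 + t^d)|A|$, and the conclusion gives $|\co(A)| \le C_{d,1/2}|A|$.

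For the inductive step $k \ge 3$, set $B := \tfrac{1}{k-1}(k-1)\cdot A$ and $\lambda := (|B|/|A|)^{1/d}$, noting $\lambda \ge 1$ by Brunn--Minkowski. Rescaling $A$ to $A' := \lambda A$ gives $|A'| = |B|$, and we apply \cref{distinctsets} to the pair $(A', B)$ with parameter $t := 1/(1 + (k-1)\lambda) \in (0, 1/2]$. A direct check yields $tA' + (1-t)B = \lambda t \cdot (k\cdot A)$, so the hypothesis $|tA' + (1-t)B| < (1 + t^d)|A'|$ rearranges to
\[
|k\cdot A|/|A| < 1 + \bigl(1 + (k-1)\lambda\bigr)^d.
\]
Writing $S_k := 1^d + \cdots + k^d$, this is implied by the standing hypothesis $|k\cdot A| < S_k|A|$ provided $\lambda \ge \lambda^* := ((S_k - 1)^{1/d} - 1)/(k-1)$. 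In this regime \cref{distinctsets} gives $|\co(A' \cup B)| \le C|A'|$, and rescaling via $\co(A) \subseteq \co(A' \cup B)/\lambda$ (up to translation) yields $|\co(A)| \le C|A|$.

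The remaining regime is $\lambda < \lambda^*$, i.e., $|(k-1)\cdot A| < ((S_k - 1)^{1/d} - 1)^d |A|$. This is the main obstacle: the upper bound on $|(k-1)\cdot A|/|A|$ can exceed $S_{k-1}$, so the inductive hypothesis for $k-1$ does not immediately apply. However, in this range $\lambda$ is bounded and close to $1$, meaning $(k-1)\cdot A$ is near the Brunn--Minkowski floor $(k-1)^d|A|$; one expects quantitative Brunn--Minkowski stability (from the literature cited in the paper, e.g., \cite{figalli2017quantitative,van2021sharp}) to force $A$ close to a convex set, and thereby bound $|\co(A)|/|A|$. The delicate technical step is arranging these two regimes to overlap at $\lambda = \lambda^*$ and gluing the bounds into a single constant $C_{d,k}$ that closes the induction on $k$; this is precisely where the extremal configuration $A \approx K \cup \{v\}$ at the threshold $S_k$ makes the numerology tight.
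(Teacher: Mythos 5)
The base case and the $\lambda\ge\lambda^*$ regime are handled correctly and the scaling identity $tA'+(1-t)B=\lambda t\cdot(k\cdot A)$ with $t=1/(1+(k-1)\lambda)$ is a clean reduction. (You should also note that $\lambda$ is bounded above under the standing hypothesis, since $|(k-1)\cdot A|\le|k\cdot A|<S_k|A|$, so the dependence of $C_{d,t}$ on $t$ stays controlled.) However, the regime $\lambda<\lambda^*$ is a genuine gap, and it is not a small one. Concretely, take $d=1$, $k=3$: then $S_3=6$, $\lambda^*=2$, so the leftover regime is $3\le|2\cdot A|/|A|<4$, whereas the inductive hypothesis at $k-1=2$ requires $|2\cdot A|/|A|<S_2=3$. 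There is a whole open window where neither the rescaled application of \cref{distinctsets} nor the inductive hypothesis applies. Your proposed fix --- invoking quantitative Brunn--Minkowski stability to conclude $A$ is near-convex --- cannot work here: those results (including \cref{figjer}, \cref{homobm}, and the corollaries in this very paper) only apply when the doubling is within $\Delta_{d,t}$ or $t^d$ of the Brunn--Minkowski floor, but in your leftover regime the doubling of $(k-1)\cdot A$ can sit far above $(k-1)^d$ (up to $((S_k-1)^{1/d}-1)^d$), and in fact the paper's own sharpness example shows that no structural conclusion can be drawn at the threshold itself. The argument is essentially circular: you are trying to establish the sharp threshold by appealing to stability results whose thresholds are what is at stake.

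The paper avoids this entirely by not inducting on $k$. Instead it proves a one-dimensional iterated-sumset lemma (\cref{strengthenedlem}) giving $|S|\ge\sum_i i|Y_i|$ for $S=\bigcup_i\{0,\dots,k-i\}+i\cdot Y_i$, uses \cref{positioning} and Pl\"unnecke to either find long fibres in every direction (forcing large doubling directly) or to isolate one direction with uniformly short fibres, and then integrates the 1D lemma along fibres in that direction. This gives the threshold $1^d+\cdots+k^d$ in one shot for every $k$, with no induction and no appeal to stability. If you want to rescue your reduction, you would need a genuinely new ingredient for the $\lambda<\lambda^*$ window; the 1D lemma the paper proves is exactly the missing tool.
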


\begin{rmk}
Cole Hugelmeyer, Hunter Spink, and Jonathan Tidor established \Cref{iteratedsumsets} independently. \Cref{distinctsets} for $t=1/2$ coincides with \Cref{iteratedsumsets} for $k=2$. We establish this result as well as \Cref{linearstabilitycor} for $t=1/2$ through independent methods in \cite[Corollary 1.9]{LocalityinSumsets}. 
\end{rmk}

Combining \Cref{distinctsets} with the main result from \cite{figalli2017quantitative} (included here as \Cref{figjer}), we find the following quantitative stability of the Brunn-Minkowski inequality.

\begin{cor}\label{figjercor}
For all $d\in\mathbb{N},t\in(0,1)$, there exist $a_{d,t},C_{d,t}>0$ so that if $A,B\subset \mathbb{R}^d$ of the same volume satisfy $\delta:=\delta_t(A,B)< t^d$, then (up to translation) we have 
$$|\co(A\cup B)\setminus A|\leq C_{d,t}\delta^{a_{d,t}}|A|.$$
\end{cor}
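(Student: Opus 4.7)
The plan is to deduce \Cref{figjercor} by splicing together two regimes: the small-$\delta$ regime handled by Figalli--Jerison's original theorem (\Cref{figjer}) and the intermediate regime $\Delta_{d,t}\le \delta<t^d$ handled by \Cref{distinctsets}. Let $a_{d,t},c_{d,t},\Delta_{d,t}>0$ be the constants supplied by \Cref{figjer}. If $\delta<\Delta_{d,t}$, then after a suitable translation, \Cref{figjer} directly yields $|\co(A\cup B)\setminus A|\le c_{d,t}\delta^{a_{d,t}}|A|$, which is of the desired form.

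For the remaining range $\Delta_{d,t}\le \delta<t^d$, \Cref{distinctsets} applies: after a (possibly different) translation, we have $|\co(A\cup B)|\le C_{d,t}|A|$, and hence
\[
|\co(A\cup B)\setminus A|\le (C_{d,t}-1)|A|\le C_{d,t}|A|.
\]
Since $\delta\ge \Delta_{d,t}$ in this regime, we can absorb the constant by writing $|A|\le \Delta_{d,t}^{-a_{d,t}}\delta^{a_{d,t}}|A|$, and therefore
\[
|\co(A\cup B)\setminus A|\le C_{d,t}\Delta_{d,t}^{-a_{d,t}}\delta^{a_{d,t}}|A|.
\]
Taking the larger of the two prefactors, namely $C_{d,t}':=\max\{c_{d,t},\,C_{d,t}\Delta_{d,t}^{-a_{d,t}}\}$, gives the claimed bound uniformly for all $\delta<t^d$.

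The exponent $a_{d,t}$ is inherited directly from \Cref{figjer}, so no optimization of exponents is needed. The only subtlety is that the two regimes may require different translations; this causes no issue because in each regime the bound is applied to its own translate, and the final statement only asserts the existence of \emph{some} translation for each given pair $(A,B)$. I do not foresee a genuine obstacle here: once \Cref{distinctsets} is in hand, the corollary is a routine two-regime patching, and the main work of the paper is the proof of \Cref{distinctsets} itself.
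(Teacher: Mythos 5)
Your proof is correct and follows exactly the same two-regime splicing as the paper: apply \Cref{figjer} when $\delta<\Delta_{d,t}$, apply \Cref{distinctsets} when $\Delta_{d,t}\le\delta<t^d$ and absorb the constant via $1\le(\delta/\Delta_{d,t})^{a_{d,t}}$, then take the maximum of the two prefactors. No substantive difference.
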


Combining \Cref{distinctsets} with the main result from \cite{van2021sharp} (included as \Cref{homobm}) proving a conjecture from \cite{figalli2021quantitative}, we find the following stability of the Brunn-Minkowski inequality for homothetic sets.

\begin{cor}\label{linearstabilitycor}
For all $d\in\mathbb{N},t\in(0,1/2]$, there exist $C_{d,t}$ so that if $A\subset \mathbb{R}^d$ satisfies $\delta:=\delta_t(A,A)< t^d$, then
$$|\co(A)\setminus A|\leq C_{d,t}\delta|A|.$$
\end{cor}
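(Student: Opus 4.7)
The strategy is to combine \Cref{distinctsets} (applied with $B=A$) with \Cref{homobm} from \cite{van2021sharp}. The role of \Cref{distinctsets} is to extend the range of $\delta$ under which linear stability holds all the way up to $\delta<t^d$, by giving us a priori control on $|\co(A)|/|A|$, while \Cref{homobm} is responsible for the actual linear dependence on $\delta$ in the stability bound in its native (small-$\delta$) regime.

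More concretely, the plan is the following. Observe first that when $A=B$ no translation is needed in \Cref{distinctsets}, since the hypothesis and conclusion are translation-invariant and symmetric in $A$ and $B$. Applying \Cref{distinctsets} with $B=A$ under the hypothesis $\delta:=\delta_t(A,A)<t^d$ yields $|\co(A)|\leq C_{d,t}|A|$ for some $C_{d,t}$ depending only on $d$ and $t$. This reduces the problem to the bounded-doubling regime in which \Cref{homobm} is designed to apply.

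Next, I would split into two cases depending on the size of $\delta$. Let $\Delta_{d,t}>0$ denote the small-$\delta$ threshold produced by \Cref{homobm} for the homothetic stability statement $|\co(A)\setminus A|\leq c_{d,t}\delta|A|$ (possibly after combining with the a priori bound $|\co(A)|\leq C_{d,t}|A|$ from Step 1). If $\delta<\Delta_{d,t}$ then \Cref{homobm} applies directly and gives the desired linear bound. Otherwise $\Delta_{d,t}\leq \delta<t^d$, in which case
\[
|\co(A)\setminus A|\leq (C_{d,t}-1)|A|\leq \frac{C_{d,t}-1}{\Delta_{d,t}}\,\delta\,|A|,
\]
which is again of the required form. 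Taking the maximum of the two constants $c_{d,t}$ and $(C_{d,t}-1)/\Delta_{d,t}$ gives one $C_{d,t}$ that works throughout.

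The main potential obstacle is matching the precise hypotheses of \Cref{homobm}: that theorem may require either smallness of $\delta$ alone or a bounded-doubling assumption of the form $|\co(A)|\leq K|A|$, and one has to check that the $C_{d,t}$ produced by \Cref{distinctsets} is compatible with whichever assumption is in force. Since the a priori doubling bound depends only on $d$ and $t$, this compatibility should be automatic, and the remaining step is the purely quantitative case split described above.
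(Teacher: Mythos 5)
Your proposal is correct and follows essentially the same route as the paper, which proves \Cref{figjercor} via a case split on $\delta$ versus $\Delta_{d,t}$ (using \Cref{figjer} for small $\delta$, and \Cref{distinctsets} plus the bound $\Delta_{d,t}\leq\delta$ for the range $\Delta_{d,t}\leq\delta<t^d$) and then notes that \Cref{linearstabilitycor} follows by the analogous argument with \Cref{homobm} in place of \Cref{figjer}. Your version is organized slightly differently (applying \Cref{distinctsets} unconditionally rather than only in the large-$\delta$ case, and using $|\co(A)\setminus A|\leq(C_{d,t}-1)|A|$, a marginally sharper but inconsequential refinement), and your worry about matching the hypotheses of \Cref{homobm} is moot since, as stated, it only requires $\delta<\Delta_{d,t}$; these are cosmetic differences and the argument is sound.

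One small remark on the translation point: since the conclusion of \Cref{distinctsets} is that $|\co(A\cup(B+v))|\leq C_{d,t}|A|$ for \emph{some} translate $v$, when $B=A$ you simply observe $\co(A)\subset\co(A\cup(A+v))$, so the bound $|\co(A)|\leq C_{d,t}|A|$ follows regardless of which $v$ is produced; phrasing this as ``no translation is needed'' is slightly imprecise but the conclusion is the same.
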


All these results are sharp as shown by $A=[0,1]^d$ and $B=A\cup \{v\}$, where $v\in\mathbb{R}^d$ is some arbitrarily large vector. For these $A,B$, we have 
$$tA+(1-t)B=A \cup [0,t]^d+(1-t)v,$$
so that $\delta_t(A,B)=t^d$, while $\frac{|\co(A)\setminus A|}{|A|}\to \infty$ as $||v||_2\to \infty$.

In \Cref{1dsec}, we establish strong versions of the results for $d=1$ which are instrumental in the proof of the general results. In \Cref{distinctsec}, we prove \Cref{distinctsets}. In \Cref{iteratedsec}, we prove \Cref{iteratedsumsets}. Finally, in \Cref{corsec} we include proofs of the corollaries.

The idea in both proofs is to find two points in $A$ (or $B$) for each coordinate direction that are very far apart, which can be done by increasing $|\co(A)|$ in combination with \Cref{positioning}. We then distinguish two cases; either $A$ contains long fibres in all coordinate directions or not. In the former case, we find a lower bound on the doubling using Pl\"unnecke's inequality\footnote{Pl\"unnecke's inequality \cite{Plunnecke} states that $|X+Y|\leq \lambda |Y|$ implies $|d\cdot X|\leq \lambda^d|Y|$.} as the sum of those long fibres is large (see \Cref{longfibresinalldirections}). In the latter case we fix a direction in which the fibres of $A$ are short and show that using an optimal transport map, we can pair up the fibres from $A$ and $B$ whose (weighted) sum form a reference set of size $|A|$ (see \Cref{transportlem}). Finally we show that summing fibres of $B$ with the two far removed points from $A$ gives a set disjoint from the reference set of the required size (see \Cref{distinctsetlem}).

\section{Strong versions of Freiman's $3k-4$ theorem}\label{1dsec}
We use two versions of the following lemma. This lemma implies continuous versions of Freiman's $3k-4$ theorem for distinct sets.

\begin{lem}\label{distinctsetlem}
Given subsets $X,Y,Z\subset [0,1]$, we have 
$$|(X+Y) \cup (\{0,1\}+Z)|\geq \min\{1,|X|+|Y|\}+|Z|.$$
\end{lem}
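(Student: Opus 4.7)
The plan is to decompose the union according to whether points lie in $[0,1]$ or $[1,2]$, and then combine an elementary set-theoretic inequality with Raikov's sumset inequality on the circle. Write $S_1:=(X+Y)\cap[0,1]$ and $S_2:=(X+Y)\cap[1,2]$, so that $X+Y = S_1\cup S_2$. Since $\{0,1\}+Z = Z\cup(1+Z)$ with $Z\subset[0,1]$ and $1+Z\subset[1,2]$, splitting the union by the two intervals gives
\begin{equation*}
|(X+Y)\cup(\{0,1\}+Z)| \;=\; |S_1\cup Z| + |S_2\cup(1+Z)| \;=\; |S_1\cup Z| + |(S_2-1)\cup Z|.
\end{equation*}

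First I would establish the elementary inequality $|A\cup C|+|B\cup C|\ge |A\cup B|+|C|$, valid for all measurable $A,B,C$. By inclusion--exclusion this is equivalent to $|A\cap C|+|B\cap C|\le |A\cap B|+|C|$, which follows from the identity $|A\cap C|+|B\cap C|-|A\cap B\cap C| = |(A\cup B)\cap C|\le |C|$ together with $|A\cap B\cap C|\le |A\cap B|$. Applying this with $(A,B,C)=(S_1,S_2-1,Z)$, all subsets of $[0,1]$, yields
\begin{equation*}
|S_1\cup Z| + |(S_2-1)\cup Z| \;\ge\; |S_1\cup(S_2-1)| + |Z|.
\end{equation*}

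Next I would invoke the Raikov/Macbeath inequality on the circle $\mathbb{T}=\mathbb{R}/\mathbb{Z}$, which states that $|U+V|_{\mathbb{T}}\ge \min\{1,|U|+|V|\}$ for measurable $U,V\subset\mathbb{T}$. Identifying $[0,1]$ with $\mathbb{T}$ and using that the reduction of $X+Y$ modulo $1$ is exactly $S_1\cup(S_2-1)$, this gives
\begin{equation*}
|S_1\cup(S_2-1)| \;\ge\; \min\{1,|X|+|Y|\}.
\end{equation*}
Chaining the two displayed inequalities proves the lemma.

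The main conceptual step is recognising that one-dimensional Brunn--Minkowski alone, which gives only $|S_1|+|S_2|=|X+Y|\ge |X|+|Y|$, cannot distinguish between the cases where the folded pieces $S_1$ and $S_2-1$ overlap or not; Raikov on the circle is exactly the ingredient that lower-bounds the union of the two folds and thus allows one to absorb the extra copy of $Z$ arising from $\{0,1\}+Z$. Once this observation is made, the remainder is routine bookkeeping.
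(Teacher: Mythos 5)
Your proof is correct and rests on the same core idea as the paper's: fold the sumset $X+Y$ onto the circle $\mathbb{T}=\mathbb{R}/\mathbb{Z}$, invoke the Raikov/Macbeath (Cauchy--Davenport) inequality to get $\min\{1,|X|+|Y|\}$, and exploit the fact that $\{0,1\}+Z$ places a copy of $Z$ in each of the two fundamental domains $[0,1]$ and $[1,2]$ so that $|Z|$ can be added. The only difference is bookkeeping: the paper partitions the union $S$ by whether a point's residue mod $1$ lies in $Z$ and counts fibers ($|f^{-1}(z)\cap S|\ge 2$), whereas you partition by the interval $[0,1]$ versus $[1,2]$ and route the double-count of $Z$ through the elementary inequality $|A\cup C|+|B\cup C|\ge|A\cup B|+|C|$ — a slightly more explicit but equivalent accounting.
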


\begin{proof}
Let $S:=X+Y \cup \{0,1\}+Z$. Let $f:\mathbb{R}\to\mathbb{T}; x\mapsto x-\lfloor x\rfloor$ be the canonical quotient map from the line to the torus. Note that for $z\in Z$, we have $|f^{-1}(z)\cap S|\geq 2$. Moreover, note that $|f(X+Y)|\geq \min\{1,|f(X)|+|f(Y)|\}$ by Cauchy-Davenport, so we find:
\begin{align*}|S|&\geq |S\cap f^{-1}(Z)|+|S\setminus f^{-1}(Z)|\\
&\geq|(\{0,1\}+Z)\cap f^{-1}(Z)|+|(X+Y)\setminus f^{-1}(Z)|\\
&\geq 2|Z| + |f(X+
Y)\setminus Z|\\
&\geq 2|Z|+\big(\min\{1,|X|+|Y|\}- |Z|\big)\\
&=\min\{1,|X|+|Y|\}+|Z|.
\end{align*}
The lemma follows.
\end{proof}
We will only apply \Cref{distinctsetlem} with sets so that $|X|,|Y|,|Z|\leq \frac12$, so that the bound gives $|X|+|Y|+|Z|$.

Note that for sets $A,B\subset\mathbb{R}$ with $|\co(A)|\geq |\co(B)|$, we can set $X=A$ and $Y=Z=B$ so that
$$|A+B|\geq |B|+\min\{|\co(A)|,|A|+|B|\}=|A|+|B|+\min\{|\co(A)\setminus A|, |B|\},$$
which can be seen as a stronger version of the one dimensional instance of \Cref{distinctsets}.

For iterated sumsets we have the following version of this lemma. 

\begin{lem}\label{strengthenedlem}
Let $Y_i\subset [0,1]$ (for $1\leq i\leq k$) so that $|Y_i|\leq 1/k$, and let $S:=\bigcup_{i=1}^k \{0,1, \dots, (k-i)\}+i\cdot Y_i$, then we have $|S|\geq  \sum_i i|Y_i|$
\end{lem}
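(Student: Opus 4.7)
The plan is to follow the strategy of \Cref{distinctsetlem}: project to the torus, apply continuous Cauchy--Davenport, and then combine the resulting fibre lower bounds with an elementary pigeonhole.

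Let $f:\mathbb{R}\to\mathbb{T}:=[0,1)$, $x\mapsto x-\lfloor x\rfloor$, and set $\bar Y_i:=f(i\cdot Y_i)\subset\mathbb{T}$. Since $f$ is a group homomorphism and $Y_i\subset[0,1)$, the set $\bar Y_i$ equals the $i$-fold Minkowski sum of $Y_i$ computed in $\mathbb{T}$. Iterating continuous Cauchy--Davenport on the circle, and using $i|Y_i|\leq i/k\leq 1$ at each step so that the iterate does not wrap around, I obtain $|\bar Y_i|\geq i|Y_i|$.

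Next, $S\subset[0,k]$, and for almost every $y\in\mathbb{T}$ the fibre $f^{-1}(y)\cap S$ is contained in $\{y,y+1,\ldots,y+k-1\}$, with
$$|S|=\int_\mathbb{T}|f^{-1}(y)\cap S|\,dy.$$
For each $i$ with $y\in\bar Y_i$, there exists $j\in\{0,1,\ldots,i-1\}$ such that $y+j\in i\cdot Y_i$, and hence $\{y+j,y+j+1,\ldots,y+j+k-i\}\subset\{0,1,\ldots,k-i\}+i\cdot Y_i\subset S$. Thus $f^{-1}(y)\cap S$ contains a ``window'' of $k-i+1$ consecutive integer translates of $y$.

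The only combinatorial content is a one-line pigeonhole: writing $I(y):=\{i:y\in\bar Y_i\}$ and $r:=|I(y)|$, the minimum $i_{\min}(y)$ of the $r$-element set $I(y)\subset\{1,\ldots,k\}$ satisfies $i_{\min}(y)\leq k-r+1$, so the window coming from $i_{\min}(y)$ already has length $k-i_{\min}(y)+1\geq r$. Hence $|f^{-1}(y)\cap S|\geq|I(y)|$, and integrating yields
$$|S|\geq\int_\mathbb{T}|I(y)|\,dy=\sum_{i=1}^k|\bar Y_i|\geq\sum_{i=1}^k i|Y_i|,$$
as desired. I do not anticipate any real obstacle: the hypothesis $|Y_i|\leq 1/k$ is exactly what prevents the Cauchy--Davenport estimate from being capped at $1$, and the single clever moment is selecting the window attached to the smallest index of $I(y)$ so that its length already dominates $|I(y)|$.
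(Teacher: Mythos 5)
Your proof is correct and essentially the same as the paper's: both project to the torus via $f$, apply continuous Cauchy--Davenport to get $|f(i\cdot Y_i)|\geq i|Y_i|$, and lower bound the fibre $|f^{-1}(y)\cap S|$ by exhibiting a window of $k-i+1$ consecutive integer translates of $y$ whenever $y\in f(i\cdot Y_i)$. Your explicit pigeonhole $i_{\min}(y)\leq k-|I(y)|+1$ is precisely the pointwise form of the paper's ``with a little thought'' inequality $\sum_i(k-i+1)|Z_i|\geq\sum_i|f(i\cdot Y_i)|$, since $Z_i$ is exactly the set of $y$ with $i_{\min}(y)=i$.
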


\begin{proof}
Let $f:\mathbb{R}\to\mathbb{T}; x\mapsto x-\lfloor x\rfloor$ be the canonical quotient map from the line to the torus. Note that for $y\in f(i\cdot Y_i)$, we have $|f^{-1}(y)\cap S|\geq k-i+1$. Moreover, note that $|f(i\cdot Y_i)|\geq i|Y_i|$ by Cauchy-Davenport.
Let $Z_i:=f(i\cdot Y_i)\setminus \bigcup_{j<i}f\left(j\cdot Y_j\right)$. With a little thought (e.g. by induction on $k$), we find  $\sum_i (k-i+1) |Z_i|\geq\sum_i |f(i\cdot Y_i)|\geq \sum_i i|Y_i|$.
Combining these we find
\begin{align*}|S|&\geq \sum_i|S\cap f^{-1}(Z_i)|\\
&\geq \sum_i \left|(\{0,1,\dots, (k-i)\}+i\cdot Y_i)\cap f^{-1}(Z_i)\right|\\
&\geq \sum_i (k-i+1)|Z_i|\\
&\geq \sum_i i|Y_i|.
\end{align*}
The lemma follows.
\end{proof}

This proof gives the following, which can be seen as the continuous version of Corollary 1 from \cite{lev1996structure}. For $A\subset \mathbb{R}$ if we let $\ell:=\min\left\{\left\lfloor\frac{|\co(A)|}{|A|}\right\rfloor, k\right\}$, then 
$$|k\cdot A|\geq \binom{\ell+1}{2}|A|+\left(k-\ell\right)|\co(A)|.$$
In the most dense situation (i.e. $\ell=1$), this gives that if $|k\cdot A|\leq (k+1)|A|$, then $|\co(A)\setminus A|\leq \frac{1}{k-1}\left(|k\cdot A|-k|A|\right)$. In the least dense situation (i.e. $\ell=k$), this is a sharp version of \Cref{iteratedsumsets} in one dimension. For $k=2$ this reduces to the continuous version of Freiman's $3k-4$ theorem. These results are sharp as shown by a union of an interval with a point.

\section{Distinct sets: \Cref{distinctsets}}\label{distinctsec}

We use the following standard lemma that establishes the existence of a large subset of $tA+(1-t)B$. For an exposition see e.g. Section 3, Step 1 in \cite{figalli2015stability}, for a proof of this specific lemma see Appendix D in \cite{mccann1994convexity}.

\begin{lem}[\cite{mccann1994convexity}]\label{transportlem}
Let $\mu_A,\mu_B\colon\mathbb{R}^{d-1}\to \mathbb{R}$ be two probability measures and $T:\mathbb{R}^{d-1}\to\mathbb{R}^{d-1}$ the optimal transport map so that for all measurable $X\subset \mathbb{R}^{d-1}$, we have $\mu_A(X)=\mu_B(T(X))$. For $t\in(0,1)$, let $\rho_t:\mathbb{R}^{d-1}\to \mathbb{R}$ be defined by
$\rho_t(x):= t\mu_A(y)+(1-t)\mu_B(T(y))$ where $y\in\mathbb{R}^{d-1}$ is the unique element so that $x=ty+(1-t)T(y)$. Then $\int \rho_t\geq 1$.
\end{lem}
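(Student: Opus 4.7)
My approach would follow the standard McCann displacement-interpolation argument. By Brenier's theorem the optimal transport map has the form $T=\nabla\phi$ for some convex $\phi$, and by Alexandrov's theorem $T$ is differentiable $\mu_A$-almost everywhere with symmetric positive semi-definite derivative $DT$; moreover the Monge--Amp\`ere equation $\mu_A(y)=\mu_B(T(y))\det(DT(y))$ holds a.e. The displacement map $T_t(y):=ty+(1-t)T(y)=\nabla\bigl(\tfrac{t}{2}|y|^2+(1-t)\phi(y)\bigr)$ is itself the gradient of a convex function and therefore injective almost everywhere, so it is a legitimate change of variables.

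The first step is the area formula: since $x=T_t(y)$ parametrises a set of full measure in the image,
\[\int \rho_t(x)\,dx=\int\bigl[t\mu_A(y)+(1-t)\mu_B(T(y))\bigr]\det(DT_t(y))\,dy.\]
The pointwise integrand bound I want is $[t\mu_A(y)+(1-t)\mu_B(T(y))]\det(DT_t(y))\geq\mu_A(y)$. Writing $p=\mu_A(y)$, $q=\mu_B(T(y))$, $J=\det(DT(y))$ so that $p=qJ$, and applying Minkowski's determinant inequality to $DT_t=tI+(1-t)DT$, the required bound reduces to
\[(tJ+(1-t))\bigl(t+(1-t)J^{1/(d-1)}\bigr)^{d-1}\geq J,\]
which follows from two applications of weighted AM--GM producing the factors $J^{t}$ and $J^{1-t}$. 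Integrating yields $\int\rho_t\geq\int\mu_A=1$.

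The main technical obstacle is regularity: to legitimately apply the area formula one needs $T_t$ to be injective and differentiable off a $\mu_A$-null set, and one must check that the Jacobian identity and the Monge--Amp\`ere equation hold in the a.e.\ sense. These are standard consequences of Alexandrov's theorem and the theory of gradients of convex functions, and are precisely what is developed in McCann's paper; for that reason I would invoke them rather than reprove them, and present only the clean Minkowski plus AM--GM calculation above as the substantive content.
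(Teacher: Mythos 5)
Your proof is correct and is essentially the argument from Appendix D of McCann's thesis, which is exactly the source the paper cites for this lemma (the paper gives no proof of its own, only the reference). The chain of reductions — change of variables via $T_t = \nabla\bigl(\tfrac{t}{2}|\cdot|^2 + (1-t)\phi\bigr)$, Minkowski's determinant inequality on $\det(tI + (1-t)DT)$, then weighted AM--GM twice to get $(tJ+(1-t))\bigl(t+(1-t)J^{1/(d-1)}\bigr)^{d-1}\geq J^t\cdot J^{1-t} = J$ — is sound, and your decision to cite Alexandrov-type regularity and the a.e.\ Monge--Amp\`ere equation rather than reprove them is exactly what the paper does by deferring to \cite{mccann1994convexity}.
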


\begin{lem}\label{positioning}
    Given convex sets $X,Y\subset\mathbb{R}^d$, there exists a translation $v\in\mathbb{R}^d$ and an affine transformation $T:\mathbb{R}^d\to\mathbb{R}^d$, so that if we let $U:=T(X)$ and $V:=v+T(Y)$, then there are points $p^i\in U\cup V$, $\lambda_i\in\mathbb{R}^d$ and hyperplanes $H_i\subset\mathbb{R}^d$ (for $1\leq i\leq d$) so that:
    \begin{enumerate}
        \item if $p^i\in U$, then $p^i+\lambda_ie_i\in U$ and if  $p^i\in V$, then $p^i+\lambda_ie_i\in V$,
        \item $U\cup V\subset H_i+[0,\lambda_i] e_i$, and
        \item $\big|\bigcap_i H_i+[0,\lambda_i] e_i\big|= \prod_i \lambda_i$
    \end{enumerate}
\end{lem}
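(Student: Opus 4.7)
The plan is to prove the lemma by induction on the dimension $d$. In the base case $d=1$, the sets $X, Y \subset \mathbb{R}$ are intervals: after rescaling by $T$ we may assume $|T(X)| \geq |T(Y)|$, translate $Y$ so that $v+T(Y) \subseteq T(X)$, and take $\lambda_1 = |T(X)|$, $p^1 = \min T(X)$, $H_1 = \{\min T(X)\}$. All three conditions hold trivially because $\co(U \cup V)$ collapses to the single interval $U$.

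For the inductive step, I first pick an ``extremal'' chord: let $\lambda$ be the maximum length of any segment contained entirely inside $X$ or entirely inside $Y$, and let $u \in S^{d-1}$ be its direction. After a rotation we may take $u = e_d$. Since $\lambda$ is a global maximum, both $X$ and $Y$ have diameter at most $\lambda$ (any diameter is itself realised by a chord inside the set, whose length is bounded by $\lambda$ by choice), and hence $e_d$-extent at most $\lambda$. Translating $Y$ along $e_d$, we arrange that the $e_d$-coordinates of $U := T(X)$ and $V := v + T(Y)$ lie in a common interval $[0, \lambda]$; set $\lambda_d := \lambda$, $p^d$ to be the lower endpoint of the chord, and $H_d := \{x_d = 0\}$.

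For the remaining $d - 1$ coordinates I project $U$ and $V$ onto $e_d^\perp \cong \mathbb{R}^{d-1}$ along $e_d$ and apply the inductive hypothesis to the projections $\pi(U), \pi(V)$. This produces an affine map $T'$, translation $v'$, and data $(p'^i, \lambda'_i, H'_i)_{i < d}$ in $\mathbb{R}^{d-1}$ satisfying the three conditions for the projected sets. Lifting $T'$ trivially on the $e_d$-coordinate, setting $H_i := H'_i \times \mathbb{R} e_d$ for $i < d$, and taking each $p^i$ to be a lift of $p'^i$ to $U \cup V$ completes the data. Condition (3) is automatic here because each $H_i$ is perpendicular to $e_i$, so $\bigcap_i (H_i + [0, \lambda_i] e_i)$ is the axis-aligned box $\prod_i [0, \lambda_i]$ of volume $\prod_i \lambda_i$.

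\textbf{The main obstacle} is that a chord of length $\lambda'_i$ in direction $e_i$ inside the projection $\pi(U)$ does not automatically lift to a chord of the same length inside $U$ itself: the fibres of $U$ above the projected chord may have trivial common intersection, so no single $e_i$-parallel segment of length $\lambda'_i$ sits inside $U$. To bridge this I would apply additional shearing transformations in the $e_d$-direction---these preserve both $\pi(U), \pi(V)$ and the $e_d$-chord established above, but can shift fibres of $U$ (and $V$) so as to align them, producing the required lifted chord. Verifying that such shears can be chosen simultaneously compatibly with all the inductive data for $i < d$ is where I expect the bulk of the technical work to reside.
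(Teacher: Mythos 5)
Your strategy — rotate the longest chord to $e_d$, then project onto $e_d^\perp$ and induct on the projections — is genuinely different from the paper's, and the gap you flag in your last paragraph is not merely technical: as stated, the shearing fix cannot work. Any affine map of the form $T(x)=(x_{<d},\,x_d+\langle c,x_{<d}\rangle)$ preserves the projection (hence the inductive data $p'^i,\lambda'_i,H'_i$) precisely because it moves points only in the $e_d$-direction, and for exactly that reason it changes the $e_d$-extent of $U\cup V$ whenever $c\neq 0$ and the projections are full-dimensional. But condition (2) for $i=d$, with $H_d=\{x_d=0\}$, forces the $e_d$-extent to stay within $\lambda_d$, and $\lambda_d$ was already pinned down to be the longest chord of $X$ or $Y$. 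So there is no nontrivial shear that simultaneously preserves the projections (needed for $i<d$) and the $e_d$-slab (needed for $i=d$). Conversely, a shear that preserves the $e_d$-coordinate — e.g.\ $x\mapsto x+x_d\,w$ with $w\perp e_d$ — would keep the slab intact but destroy $\pi(U),\pi(V)$ and with them your inductive data. This is a real obstruction, not something a careful verification will make go away.

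The paper avoids it by never projecting. It builds the coordinate structure one direction at a time directly in $\mathbb{R}^d$: at step $k+1$ it translates one set along $e_{k+1}$ so that the two points of $X\cup Y$ extremal in the $e_{k+1}$-coordinate land in the same convex set (so the joining segment lies inside that set), and then applies a shear $x\mapsto x-\langle x,e_{k+1}\rangle w$ with $w\perp e_{k+1}$. This shear preserves the $e_{k+1}$-coordinate of every point (so the slab width is exactly the segment length), preserves $e_i$ for $i\neq k+1$ (so conditions (1) and (2) for earlier directions survive, with $H_i$ replaced by its image under the shear), and has determinant $1$ (so the volume in condition (3) is unchanged). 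Crucially, the segment being aligned is not an $e_{k+1}$-parallel chord to begin with — it is the oblique segment joining the extremal points — and the shear is what makes it axis-parallel. Your choice of taking the globally longest chord as $\lambda_d$ is also unnecessary for the lemma and forces you to invoke the diameter argument; the paper simply takes the $e_{k+1}$-extent of $X\cup Y$ after the translation, which is automatically both the slab width and the length of a segment contained in one set.

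If you want to salvage a projection-based argument, you would need to strengthen the inductive hypothesis so that it yields data compatible with lifting (for instance, asking the lower-dimensional conclusion to hold after an arbitrary further shear that preserves the ambient subspace pointwise), but that essentially amounts to re-deriving the paper's one-direction-at-a-time scheme.
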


\begin{proof}
We proceed by induction on $k$; assume after an affine transformation and translation, there are (for $1\leq i\leq k$) $p^i\in X\cup Y$, $\lambda_i\in\mathbb{R}^d$, and hyperplanes $H_i\subset\mathbb{R}^d$ so that:
\begin{enumerate}
        \item if $p^i\in X$, then $p^i+\lambda_ie_i\in X$ and if  $p^i\in Y$, then $p^i+\lambda_ie_i\in Y$,
        \item $X\cup Y\subset H_i+[0,\lambda_i] e_i$,
        \item $\left|\mathbb{R}^{k}\times \{0\}^{d-k}\cap \bigcap_{1\leq i\leq k} H_i+[0,\lambda_i] e_i\right|= \prod_{1\leq i\leq k} \lambda_i$
    \end{enumerate}
Translating one of the sets along a multiple of $e_{k+1}$ we may assume that the points $q,r\in X\cup Y$ minimizing $\langle e_{k+1},q\rangle$ and maximizing $\langle e_{k+1},r\rangle$ belong to the same set $X$ or $Y$. Note that translating along $e_{k+1}$ does not affect any of the properties of $p^i,\lambda_i$, and $H_i$ with $i\leq k$ (up to the appropriate translations). For notational convenience translate both sets by $-q$ (i.e. assume $q=0$). Let $H_{k+1}:=\mathbb{R}^{k}\times \{0\}\times \mathbb{R}^{d-k-1}$ and $\lambda_{k+1}:=\langle e_{k+1},r\rangle$. Clearly, we have $X\cup Y\subset H_{k+1}+[0,\lambda_{k+1}] e_{k+1}$ and 
$$\Bigg|\mathbb{R}^{k+1}\times \{0\}^{d-k-1}\cap \bigcap_{1\leq i\leq k+1} H_i+[0,\lambda_i] e_i\Bigg|=\Bigg|\mathbb{R}^{k}\times \{0\}^{d-k}\cap \bigcap_{1\leq i\leq k} H_i+[0,\lambda_i] e_i\Bigg|\cdot \left|\lambda_{k+1}e_{k+1}\right|= \prod_{1\leq i\leq k+1} \lambda_i.$$
The only issue is that though $r\in H_{k+1}+\lambda_{k+1}e_{k+1}$, it might not coincide with $\lambda_{k+1}e_{k+1}$. Hence, we apply the affine transformation $T\colon \mathbb{R}^d\to\mathbb{R}^d, x\mapsto x-\langle x,e_{k+1}\rangle (\lambda_{k+1}^{-1}r-e_{k+1})$ to $X,Y,p^i$, and $H_i$ (for $i\leq k$). $T$ preserves all planes parallel to $\mathbb{R}^{k}\times \{0\}\times \mathbb{R}^{d-k-1}=H_{k+1}$, and takes $r$ to $\lambda_{k+1}e_{k+1}$. As the basis vectors $e_i$ ($i\neq k+1$) are preserved by $T$, the inductive hypotheses are not affected. Choosing $p^{k+1}=(0,\dots,0)$ concludes the induction.
\end{proof}

\begin{proof}[Proof of \Cref{distinctsets}]
Let $C_{d,t}:=L^d$ where $L=L_{d,t}:=\left(\frac{2}{(1-t)t}\right)^{4d}$. We'll prove the contrapositive, so let $|\co(A\cup B)|\geq C_{d,t}|A|$. Normalise so that $|A|=|B|=1$.

For $i=1,\dots d$, let $\pi^i:\mathbb{R}^d\to \mathbb{R}$ be the coordinate projections and $\pi_i\colon\mathbb{R}^d\to\mathbb{R}^{d-1}$ the complementary projections.

First, apply \Cref{positioning} to $\co(tA)$ and $\co((1-t)B)$ and apply an affine transformation so that all $\lambda_i$ are equal and at least $L$. For $i=1,\dots, d$, let $x^i\in\mathbb{R}^{d-1}$ so that 
\begin{align*}
\max\left\{|\co(\pi_i^{-1}(x^i)\cap tA)|,|\co(\pi_i^{-1}(x^i)\cap (1-t)B)|\right\}&=\max\left\{|\pi_i^{-1}(x^i)\cap \co(tA)|,|\pi_i^{-1}(x^i)\cap \co((1-t)B)|\right\}\\
&=\max_{x\in\mathbb{R}^{d-1}}\left\{|\pi_i^{-1}(x)\cap \co(tA)|,|\pi_i^{-1}(x)\cap \co((1-t)B)|\right\}\\
&=L'>L.
\end{align*}
Let $a^i,b^i\in \mathbb{R}^d$ be such that 
$$|\pi_i^{-1}(a^i)\cap tA|\geq \max_{a\in\mathbb{R}^d}|\pi_i^{-1}(a)\cap tA|\text{ and }|\pi_i^{-1}(b^i)\cap (1-t)B|\geq \max_{b\in\mathbb{R}^d}|\pi_i^{-1}(b)\cap (1-t)B|.$$

\begin{clm}\label{longfibresinalldirections}
If for all $i=1,\dots,d$, we have $\max \left\{|\pi_i^{-1}(a^i)\cap tA|,|\pi_i^{-1}(b^i)\cap (1-t)B|\right\}\geq \sqrt{L}$, then $|tA+(1-t)B|\geq 2|A|$.
\end{clm}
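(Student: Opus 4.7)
Assume for contradiction that $|tA+(1-t)B|<2=2|A|$. The plan is to use the long fibres to build a Minkowski sum in $\mathbb{R}^d$ of $d$-dimensional volume too large to fit inside the iterated sumset of $tA$ and $(1-t)B$ that Pl\"unnecke's inequality allows.

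For each $i$, pick the winning fibre: set $F_i:=\pi_i^{-1}(a^i)\cap tA$ if that fibre has $1$-dimensional measure at least $\sqrt L$, and $F_i:=\pi_i^{-1}(b^i)\cap (1-t)B$ otherwise; let $[d]=I_A\sqcup I_B$ be the resulting partition and set $k:=|I_A|$, $m:=d-k$. Each $F_i$ lies on a line parallel to $e_i$ with $1$-dimensional measure at least $\sqrt L$, and since the $F_i$ live on lines in pairwise orthogonal coordinate directions, the Minkowski sum $R:=F_1+\cdots+F_d$ is a translate of the Cartesian product of the underlying $1$-dimensional sets. Consequently
$$|R|=\prod_{i=1}^d|F_i|\ge L^{d/2}=\Big(\tfrac{2}{(1-t)t}\Big)^{2d^2},\qquad R\subseteq k\cdot(tA)+m\cdot((1-t)B).$$

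The next step is a matching upper bound on $|k\cdot(tA)+m\cdot((1-t)B)|$ from the contradiction hypothesis via Pl\"unnecke. In the pure cases $k\in\{0,d\}$, the footnote form applied with $(X,Y)=(tA,(1-t)B)$ or its swap directly gives $|d\cdot(tA)|<2^d/(1-t)^{d^2-d}$ or $|d\cdot((1-t)B)|<2^d/t^{d^2-d}$. For $1\le k\le d-1$, I would use the asymmetric Pl\"unnecke--Ruzsa inequality
$$|X+Y_1+\cdots+Y_r|\le\frac{|X+Y_1|\cdots|X+Y_r|}{|X|^{r-1}}$$
with $X=tA$ and with $r=d-1$ summands chosen to be $k-1$ copies of $tA$ together with $m$ copies of $(1-t)B$, combined with the auxiliary bound $|tA+tA|\le|tA+(1-t)B|^2/|(1-t)B|$ (the footnote form with $d=2$). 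Under $|tA+(1-t)B|<2$ this yields an upper bound of shape $2^{O(d)}((1-t)t)^{-O(d^2)}$, in particular at most $2^{2d-2}/((1-t)^{d(d-1)}t^{d(d-2)})$.

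Finally, one compares the two estimates. In each case the ratio of $L^{d/2}$ to the Pl\"unnecke upper bound simplifies to a positive power of $2$ times $((1-t)t)^{-c}$ for some $c>0$, hence exceeds $1$ comfortably; the choice $L=(2/((1-t)t))^{4d}$ therefore forces a contradiction uniformly over $0\le k\le d$. The main obstacle is purely arithmetic bookkeeping to confirm that $L$ is large enough across every partition $I_A\sqcup I_B$; the geometric content is just the product structure of $R$ in orthogonal coordinate directions, and the only nontrivial additive-combinatorial input is Pl\"unnecke's inequality itself.
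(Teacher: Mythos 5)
Your proof is correct and establishes the claim, but it takes a genuinely different route from the paper's. You avoid the paper's case distinction by keeping only the \emph{winning} fibre $F_i$ in each coordinate direction and lower-bounding the mixed sumset $k\cdot(tA)+m\cdot((1-t)B)$ by the box $R=F_1+\cdots+F_d$. This is cleaner conceptually, but it forces you to control a \emph{mixed} iterated sumset, which requires the Pl\"unnecke--Ruzsa inequality with different summands (Gyarmati--Matolcsi--Ruzsa), a strictly stronger tool than the footnote's version. The paper instead proves that in the relevant regime \emph{both} the $a^i$- and $b^i$-fibres have length at least $1$ (handling the complementary case by a direct projection argument: if some fibre of $tA$ is shorter than $1$ then $|\pi_{i_0}(tA)|\geq t^d$, so $|tA+(1-t)B|\geq |\pi_{i_0}(tA)|\cdot|\pi^{-1}_{i_0}(b^{i_0})\cap (1-t)B|\geq t^d\sqrt{L}$). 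It then bounds the full product $\prod_i|\pi_i^{-1}(a^i)\cap tA|\cdot|\pi_i^{-1}(b^i)\cap (1-t)B|$ above by $|d\cdot tA|\cdot|d\cdot(1-t)B|$, each factor controlled separately by the basic iterated Pl\"unnecke from the footnote, and below by $(\sqrt{L})^d$. Your route buys elegance; the paper's buys a more elementary toolkit.

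One small technical point: the inequality you quote, $|X+Y_1+\cdots+Y_r|\le |X+Y_1|\cdots|X+Y_r|/|X|^{r-1}$, is not the standard form with $X$ itself on the left --- the usual conclusion is $|Y_1+\cdots+Y_r|\le |X+Y_1|\cdots|X+Y_r|/|X|^{r-1}$ (or a version with a subset $X'\subseteq X$ on the left, which doesn't directly promote to $X$). The fix is harmless: take $X=tA$ and $r=d$ summands consisting of $k$ copies of $tA$ and $m$ copies of $(1-t)B$, which gives $|k\cdot(tA)+m\cdot((1-t)B)|\le |tA+tA|^k|tA+(1-t)B|^m/|tA|^{d-1}$, still comfortably below $L^{d/2}$ with the paper's choice of $L$.
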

\begin{proof}
For a contradiction assume $|tA+(1-t)B|< 2|A|$. Distinguish two cases, either for all $i$, $$\min \left\{|\pi_i^{-1}(a^i)\cap tA|,|\pi_i^{-1}(b^i)\cap (1-t)B|\right\}\geq 1$$ or not.

In the latter case, consider the $i_0$ so that $\min \left\{|\pi_{i_0}^{-1}(a^{i_0})\cap tA|,|\pi_{i_0}^{-1}(b^{i_0})\cap (1-t)B|\right\}\leq 1$. Assume $|\pi_{i_0}^{-1}(a^{i_0})\cap tA|\leq1$ (the other case follows analogously). As $|tA|=t^d$, this implies $|\pi_{i_0}(tA)|\geq t^{d}$, so that
$$|tA+(1-t)B|\geq |\pi_{i_0}(tA)|\cdot \left|\left(\pi^{-1}_{i_0}(b^{i_0})\cap (1-t)B\right)\right|\geq t^{d}\sqrt{L}>2,$$
a contradiction.

Hence, we may assume $\min \left\{|\pi_i^{-1}(a^i)\cap tA|,|\pi_i^{-1}(b^i)\cap (1-t)B|\right\}\geq 1$ for all $i$. Note that by Pl\"unnecke's inequality $|tA+(1-t)B|< (1+(1-t)^d)|A|<2t^{-d}|tA|$ implies that 
$$\big|d\cdot (1-t)B\big|\leq (2t^{-d})^d|tA|<\left(\frac{2}{(1-t)t}\right)^{d^2}.$$
Analogously we find the same bound on $\big|d\cdot A\big|$. On the other hand, we find
$$\big|d\cdot (1-t)B\big|\geq \left|\sum_{i=1}^d \left(\pi_i^{-1}(b^i)\cap (1-t)B\right)\right|=\prod_{i=1}^d\left|\pi_i^{-1}(b^i)\cap (1-t)B\right|,$$
and the analogous result for $tA$. Combining these bounds on the iterated sumsets of $tA$ and $(1-t)B$, we find
$$\left(1 \cdot \sqrt{L}\right)^d\leq \prod_{i=1}^d\left|\pi_i^{-1}(a^i)\cap tA\right|\cdot \left|\pi_i^{-1}(b^i)\cap (1-t)B\right|\leq  \big|d\cdot tA\big|\cdot \big|d\cdot (1-t)B\big|< \left(\frac{2}{(1-t)t}\right)^{2d^2},$$
a contradiction.
\end{proof}

Hence, we may assume there is a coordinate direction $i$ with $|\pi_i^{-1}(y)\cap tA|,|\pi_i^{-1}(y)\cap (1-t)B|\leq \sqrt{L}$ for all $y\in\mathbb{R}^{d-1}$. Rotating if necessary, we may assume $i=1$. For notational convenience, write $A_x:=\pi_1^{-1}(x)\cap A$, $B_x:=\pi_1^{-1}(x)\cap B$, and $S_x:=\pi_1^{-1}(x)\cap (tA+(1-t)B)$.

By the application of \Cref{positioning}, we have that at least one of $|\co(tA_{x^1})|$ and $|\co((1-t)B_{x^1})|$ is larger than $L>2 \max_{x\in \mathbb{R}^{d-1}}\{t|A_x|,(1-t)|B_x|\}$. Henceforth assume the latter\footnote{Though the other case follows analogously, there is an asymmetry between $t$ and $1-t$ that gives a stronger result in the other case.}.

Translate $B$ so that $x^1=0$ and $(1-t)\co(B_0)=[0,L']\times (0,\dots,0)$.

Write $\mu_A\colon\mathbb{R}^{d-1}\to\mathbb{R}, x\mapsto |A_x|$ and $\mu_B$ analogously.
Let $T\colon\mathbb{R}^{d-1}\to\mathbb{R}^{d-1}$ be the optimal transport map that takes $\mu_A$ to $\mu_B$. Define $$S^1:=\bigcup_{x\in\pi(A)}tA_x(+)(1-t)B_{T(x)},$$ where for two subsets of $A_x,B_y\subset\mathbb{R}^{d-1}$, 
$$tA_x(+)(1-t)B_y:=(t\min(I)+(1-t)J)\cup (tI+(1-t)\max(J))\times \{tx+(1-t)y\},$$
where $I,J\subset\mathbb{R}$ are so that $A_x=I\times \{x\}$ and $B_y=J\times \{y\}$. Note that $|S^1_x|=t\mu_A(y)+(1-t)\mu_B(y)$ where $y\in\mathbb{R}^{d-1}$ is the unique element with $x=ty+(1-t)T(y)$. Hence, by \Cref{transportlem}, we find $|S^1|=\int |S^1_x|dx\geq |A|$.

Define $S^2:=tA+\left\{(0,\dots,0),(L',0,\dots,0)\right\}$ and note that $S^1,S^2\subset tA+(1-t)B$.

For any $x\in\mathbb{R}^{d-1}$, we find that
$$S_{tx}\supset S^1_{tx}\cup S^2_{tx}= tA_{y}+(1-t)B_{T(y)}\cup  tA_x+\left\{(0,\dots,0),(L',0,\dots,0)\right\}$$ 
where $y$ is such that $ty+(1-t)T(y)=tx$ (if such a $y$ exists)
which by \Cref{distinctsetlem} implies that
$$|S_{tx}\setminus S^1|\geq |tA_{x}|.$$
Integrating over all $x$ we find
\begin{align*}
|tA+(1-t)B\setminus S^1|&=\int_{x\in\mathbb{R}^{d-1}} |S_x\setminus S^1|\\
&\geq t^{d-1} \int_{x\in\mathbb{R}^{d-1}} |S_{tx}\setminus S^1|\\
&\geq t^{d-1} \int_{x\in\mathbb{R}^{d-1}} |tA_x|\\
&= t^{d} \int_{x\in\mathbb{R}^{d-1}} |A_x|\\
&= t^{d}|A|.
\end{align*}
This concludes the proof of the theorem.
\end{proof}

\section{Iterated sumsets: \Cref{iteratedsumsets}}\label{iteratedsec}

\begin{proof}[Proof of \Cref{iteratedsumsets}]
Let $L=L_{d,k}$ be sufficiently large in terms of $k$ and $d$.

First, apply \Cref{positioning} to $X=Y=\co(A)$ and for $i=1,\dots, d$, let $x^i\in\mathbb{R}^{d-1}$ so that $|\pi_i^{-1}(x^i)\cap \co(A)|=\max_{x\in\mathbb{R}^{d-1}}|\pi_i^{-1}(x)\cap \co(A)|=L'>L$.
\begin{clm}
If for all $i$, there exists a $y_i\in\mathbb{R}^{d-1}$ with $|\pi_i^{-1}(y_i)\cap A|\geq L/k$, then $|k\cdot A|\geq (1^d+2^d+\dots+k^d)|A|$
\end{clm}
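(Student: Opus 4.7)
The strategy is to combine a product-set lower bound on $|d\cdot A|$ coming from the $d$ mutually orthogonal long fibres with Pl\"unnecke's inequality, mirroring the template of the proof of \Cref{longfibresinalldirections}. Write $M := 1^d + 2^d + \dots + k^d$. I assume the theorem's proof normalises $|A|=1$ at the outset (analogously to the proof of \Cref{distinctsets}), and I argue by contradiction: suppose $|k\cdot A| < M$.

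Each fibre $F_i := \pi_i^{-1}(y_i) \cap A$ lies on an affine line parallel to the coordinate axis $e_i$. A coordinate-by-coordinate computation shows that the Minkowski sum $F_1 + F_2 + \dots + F_d$ is a Cartesian product of $d$ translates of the respective one-dimensional sections of the $F_i$'s: the $j$-th coordinate of a point in the sum ranges over the one-dimensional section of $F_j$ shifted by a constant that depends only on the positions $y_i$. Its $d$-dimensional Lebesgue measure therefore equals $\prod_{i=1}^d |F_i| \geq (L/k)^d$. Since each $F_i \subset A$, the product set is contained in $d\cdot A$, giving $|d\cdot A| \geq (L/k)^d$.

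For the matching upper bound, the trivial monotonicity $|A+A| \leq |k\cdot A| < M|A|$ combined with Pl\"unnecke's inequality in the form from the footnote (applied with $X = Y = A$ and $\lambda = |A+A|/|A| < M$) gives $|d\cdot A| \leq \lambda^d |A| < M^d$. Combining the two bounds yields $(L/k)^d < M^d$, i.e.\ $L < kM$, which contradicts the choice of $L = L_{d,k}$ provided $L \geq k(1^d + \dots + k^d)$. The main subtlety is maintaining the normalisation $|A|=1$, without which the dimensional factors in Pl\"unnecke and in the fibre bound would combine only to the scale-invariant statement $\prod_i|F_i| < M^d |A|$, which does not by itself produce an absolute contradiction; the rest is routine.
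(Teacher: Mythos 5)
Your proof is correct and takes essentially the same route as the paper's: both apply Pl\"unnecke to the contradiction hypothesis $|k\cdot A|<M|A|$ (where $M:=1^d+\dots+k^d$) to bound $|d\cdot A|<M^d|A|$, and then derive a lower bound $|d\cdot A|\geq\prod_i|F_i|\geq(L/k)^d$ from the $d$ pairwise-orthogonal long fibres to reach a contradiction. The only cosmetic difference is that you explicitly route Pl\"unnecke through the monotonicity $|A+A|\leq|k\cdot A|$, whereas the paper applies it directly, and you correctly flag that the claim is not scale-invariant and hence rests on the (implicit) normalisation $|A|=1$ carried over from the ambient proof of \Cref{iteratedsumsets}.
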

\begin{proof}
For a contradiction assume  $\big|k\cdot A\big|< (1^d+2^d+\dots+k^d) |A|$. Then by Pl\"unnecke's inequality we have $\big|d\cdot A\big|< (1^d+2^d+\dots+k^d)^d |A|$. However, $$\big|d\cdot A\big|\geq \left|\sum_{i=1}^d \left(\pi_i^{-1}(y_i)\cap A\right)\right|\geq (L/k)^d>(1^d+2^d+\dots+k^d)^d.$$
This contradiction proves the claim.
\end{proof}
Hence, we may assume there is a coordinate direction $i$ with $|\pi_i^{-1}(y_i)\cap A|\leq L/k$ for all $y_i\in\mathbb{R}^{d-1}$. Rotating if necessary, we may assume $i=1$. For notational convenience, write $A_x:=\pi_1^{-1}(x)\cap A$ and $S_x:=\pi_1^{-1}(x)\cap (A+A)$.

Translate $A$ so that $x^1=0$, and $A_0\supset\{0,L'\}\times (0,\dots,0)$. Now we find that 
$$S_{x}\supset \bigcup_{i=1}^k i\cdot A_{x/i}+(k-i)\cdot A_0$$ 
which by \Cref{strengthenedlem} implies that 
$$|S_{x}|\geq \sum_{i=1}^k i |A_{x/i}|.$$
We conclude:
\begin{align*}
|A+A|&=\int_{x\in\mathbb{R}^{d-1}} |S_x|dx\\
&\geq  \int_{x\in\mathbb{R}^{d-1}} \sum_{i=1}^k i |A_{x/i}|dx\\
&=\sum_{i=1}^k  i^d\int_{x\in\mathbb{R}^{d-1}} |A_{x}|dx\\
&= (1^d+\dots+k^d)|A|.
\end{align*}
This concludes the proof of the theorem.
\end{proof}

\section{Proofs of the corollaries}\label{corsec}
First recall the main theorems from \cite{figalli2017quantitative} and \cite{van2021sharp}.
\begin{thm}[\cite{figalli2017quantitative}]\label{figjer}
For all $d\in\mathbb{N},t\in(0,1)$, there exist $a_{d,t},C_{d,t},\Delta_{d,t}>0$ so that if $A,B\subset \mathbb{R}^d$ of the same volume satisfy $\delta:=\delta_t(A,B)< \Delta_{d,t}$, then (up to translation) we have 
$$|\co(A\cup B)\setminus A|\leq C_{d,t}\delta^{a_{d,t}}|A|.$$
\end{thm}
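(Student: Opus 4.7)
This theorem is quoted verbatim from \cite{figalli2017quantitative}, so my proposal is to sketch the Figalli--Jerison strategy, which I would carry out by induction on the dimension $d$ combined with fibrewise optimal transport. The base case $d=1$ is essentially a continuous form of Freiman's $3k-4$ theorem: if $\delta_t(A,B)$ is small then $A$ and $B$ must each be close to a common interval, because any point of $\co(A\cup B)$ far from $A$ forces (via a Cauchy--Davenport type bound, in the spirit of \Cref{distinctsetlem}) extra mass in $tA+(1-t)B$ proportional to its distance, yielding the statement in one dimension with a linear dependence on $\delta$.

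For the inductive step I would fix a coordinate direction, slice orthogonally to it, and write $A_x,B_y$ for the resulting $(d-1)$-dimensional fibres with marginals $\mu_A,\mu_B$. Introducing the optimal transport map $T\colon\mu_A\to\mu_B$ exactly as in \Cref{transportlem}, one obtains a decomposition of $\delta_t(A,B)$ into a fibrewise deficit $\int\rho_t(y)\,\delta_t(A_y,B_{Ty})\,dy$ plus a marginal deficit measured in dimension $d-1$; both summands are nonnegative, so a small overall deficit forces each to be small. The base case then controls the fibres and the inductive hypothesis controls the marginals, both up to some power of $\delta$ with exponent strictly less than $1$.

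The hard part is turning these two pieces of control into a single bound on $|\co(A\cup B)\setminus A|$. The delicacy is that a thinly distributed spike of $\co(A\cup B)$ far from $A$ can be almost invisible to both the fibrewise and the marginal controls individually; one must prove a quantitative trade-off showing that a spike of relative volume $\eta$ contributes at least a dimension-dependent power of $\eta$ to one of the two deficits. This trade-off is where the sub-optimal exponent $a_{d,t}<1$ and the non-explicit threshold $\Delta_{d,t}$ enter, and it is precisely the loss in the threshold that the present paper's \Cref{distinctsets} is designed to upgrade to the sharp value $t^d$.
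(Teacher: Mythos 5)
This theorem appears in the paper purely as a citation of the main result of \cite{figalli2017quantitative}; no proof of it is given anywhere in the paper, so there is no in-house argument against which to check your sketch. You are right that it is quoted verbatim, and in that sense the ``proof'' for purposes of this paper is simply the reference. That said, your sketch silently projects the machinery of the present paper back onto Figalli and Jerison's argument: the clean split of $\delta_t(A,B)$ into a fibrewise deficit plus a marginal deficit via the optimal transport lemma (\Cref{transportlem}) is the structure used \emph{here} to prove \Cref{distinctsets}, not the structure of the Figalli--Jerison proof. Their argument instead builds on their earlier treatment of the case $A=B$, $t=1/2$, then bootstraps to general $t$ and distinct $A,B$; a substantial part of the technical work is an a priori boundedness statement for $\co(A\cup B)$ when $\delta$ is small, and it is that step (not a fibrewise/marginal trade-off) that is responsible for both the non-explicit threshold $\Delta_{d,t}$ and the exponent loss $a_{d,t}<1$. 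Your closing sentence is correct in spirit --- \Cref{distinctsets} is designed exactly to upgrade that implicit boundedness threshold to the sharp value $t^d$ --- but the mechanism you attribute the loss to is not the one in the source.
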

 \begin{thm}[\cite{van2021sharp}]\label{homobm}
For all $d\in\mathbb{N},t\in(0,1/2]$, there exist $C_{d}, \Delta_{d,t}>0$ so that if $A\subset \mathbb{R}^d$ satisfies $\delta:=\delta_t(A,A)< \Delta_{d,t}$, then
$$|\co(A)\setminus A|\leq C_{d}t^{-1}\delta|A|.$$
\end{thm}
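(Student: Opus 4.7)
The plan is to deduce Theorem \ref{homobm} from Theorem \ref{distinctsets} of the present paper, together with the sharp one-dimensional Freiman-type inequality from Section \ref{1dsec}, by induction on dimension $d$. The first step is to leverage Theorem \ref{distinctsets} applied with $B=A$: provided $\delta := \delta_t(A,A) < t^d$, one obtains the qualitative a priori bound $|\co(A)| \leq C_{d,t}|A|$ (after a translation). This reduces matters to the setting where $A$ is already comparable in measure to its convex hull, so the task becomes extracting \emph{linear} control on the defect $|\co(A)\setminus A|$ from the Brunn-Minkowski defect $\delta|A|$.

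For the base case $d=1$, the remark following Lemma \ref{distinctsetlem} yields (in the symmetric case $A=B$) an inequality of the form $|tA+(1-t)A| \geq |A| + t \cdot \min\{|\co(A)\setminus A|, |A|\}$, which rearranges to $|\co(A)\setminus A| \leq t^{-1}\delta|A|$ as soon as $\delta \leq 1$. This is exactly the desired linear estimate, and it is sharp (as the example in the introduction shows).

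For the inductive step, I would fiber in a coordinate direction $e_1$ chosen so that the projection of $A$ onto $\mathbb{R}^{d-1}$ is well spread; write $A_x := \pi_1^{-1}(x) \cap A$ and let $T\colon \mathbb{R}^{d-1}\to\mathbb{R}^{d-1}$ be the optimal transport map taking $\mu_A$ to itself under the usual fiber-measure construction. By Lemma \ref{transportlem} applied with $\mu_A = \mu_B$, one has the pointwise lower bound
$$|tA + (1-t)A| \geq \int_{\mathbb{R}^{d-1}} \bigl|tA_y + (1-t)A_{T(y)}\bigr|\, dy \geq \int t\mu_A(y) + (1-t)\mu_A(T(y))\,dy.$$
Applying the sharp one-dimensional $d=1$ bound fiberwise (to the pair $tA_y$, $(1-t)A_{T(y)}$) contributes a linear term in the one-dimensional convex-hull defect $\sum_y|\co(A_y)\setminus A_y|$, while applying the inductive hypothesis to the projected measure $\mu_A$ (viewed as a density on $\mathbb{R}^{d-1}$, once one passes to a suitable rearranged/symmetrised version) controls the $(d-1)$-dimensional defect between $\operatorname{supp}(\mu_A)$ and its convex hull. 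Summing these two contributions reproduces $|\co(A)\setminus A|$ up to a constant $C_d$, losing a single factor of $t^{-1}$ coming from the one-dimensional input.

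The main obstacle is preserving the \emph{linear} dependence on $\delta$ through this fibering. Both the fiberwise Freiman result and Figalli-Jerison-type stability can each, in isolation, give only $\delta^{a_{d,t}}$ with $a_{d,t}<1$, so one must avoid any application of a non-linear stability estimate. Concretely the challenge is to show that the transport-based decomposition above captures the \emph{full} defect $|\co(A)\setminus A|$ (not just a power of it), which forces a careful two-sided analysis: the horizontal defect (fibers not being intervals) and the vertical defect (the profile $y\mapsto\mu_A(y)$ not being the indicator of a convex set) must each be bounded linearly by their respective contributions to $\delta$, and one needs a Prékopa–Leindler-type convexity argument to rule out the possibility that the two defects partially cancel in the Brunn-Minkowski bookkeeping. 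Handling the case when the projection $\pi_1(A)$ itself is very far from convex - so that the inductive hypothesis is weak - is the most delicate point, and likely requires iterating the argument over multiple coordinate directions in the spirit of Lemma \ref{positioning}.
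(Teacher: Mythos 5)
This statement is not proved in the paper at all: it is recalled verbatim as the main theorem of the reference \cite{van2021sharp} (van Hintum, Spink, Tiba), exactly so that it can be combined with \Cref{distinctsets} to yield \Cref{linearstabilitycor}. There is therefore no in-paper proof to compare against, and the proper standard is whether your sketch stands on its own as a proof of the cited result. It does not.

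Two concrete problems. First, the structure is circular in spirit: you propose to derive \Cref{homobm} from \Cref{distinctsets}, but \Cref{distinctsets} is purely qualitative (it only bounds $|\co(A)|/|A|$ by a constant and says nothing about how the gap shrinks as $\delta \to 0$), so it cannot by itself supply the linear dependence on $\delta$ that is the entire content of \Cref{homobm}. In the paper the logical flow runs the other way: \Cref{homobm} is the external input that gives linearity on a small range $\delta < \Delta_{d,t}$, and \Cref{distinctsets} is used only to extend that to the full threshold $\delta < t^d$ in \Cref{linearstabilitycor}, where a crude constant bound suffices because $\delta$ is bounded below by $\Delta_{d,t}$ in the complementary regime. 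Second, your inductive step is a plan, not a proof. The one-dimensional base case is fine, but for $d\geq 2$ you explicitly flag, without resolving, the two hard points: (i) that the horizontal fiber defect and the vertical profile defect might "partially cancel in the Brunn--Minkowski bookkeeping," and (ii) what to do when $\pi_1(A)$ is far from convex so the inductive hypothesis is weak. These are precisely the obstacles that make linear stability for homothetic sets a deep result (proved in \cite{van2021sharp} by a long and delicate argument); noting that one "needs a Pr\'ekopa--Leindler-type convexity argument" is naming the difficulty, not overcoming it. Moreover, nothing in your sketch controls the $t$-dependence sharply enough to isolate a single factor of $t^{-1}$ with the remaining constant $C_d$ independent of $t$, as the statement requires.

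In short: this theorem is a black-box citation in the present paper, and your proposal does not fill in the genuine gaps needed to reprove it; at best it outlines a reasonable heuristic for a fibering strategy while leaving the actual analytic work — the part that is hard and is done in \cite{van2021sharp} — unaddressed.
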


The corollaries follow quickly.

\begin{proof}[Proof of \Cref{figjercor}]
Let $C_{d,t}:=\max\{C^{\ref{distinctsets}}_{d,t}(\Delta^{\ref{figjer}}_{d,t})^{-a_{d,t}}, C^{\ref{figjer}}_{d,t}\}$ and $a_{d,t}:=a_{d,t}^{\ref{figjer}}$, where $a_{d,t}^{\ref{figjer}},C^{\ref{figjer}}_{d,t},$ and $\Delta^{\ref{figjer}}_{d,t}$ are the constants from \Cref{figjer} and $C^{\ref{distinctsets}}_{d,t}$ is the constant from \Cref{distinctsets}.

Distinguish two cases; either $\delta<\Delta^{\ref{figjer}}_{d,t}$ or $\Delta^{\ref{figjer}}_{d,t}\leq \delta <t^d$. In the former case \Cref{figjer} gives (after translation)
$$|\co(A\cup B)\setminus A|\leq C^{\ref{figjer}}_{d,t}\delta^{a_{d,t}^{\ref{figjer}}}|A|\leq C_{d,t}\delta^{a_{d,t}}|A|.$$
In the latter case, we find by \Cref{distinctsets} that
$$|\co(A\cup B)\setminus A|\leq |\co(A\cup B)|\leq C^{\ref{distinctsets}}_{d,t}|A|\leq C_{d,t}(\Delta^{\ref{figjer}}_{d,t})^{a_{d,t}}|A|\leq C_{d,t}\delta^{a_{d,t}}|A|.$$
Combining the cases gives the corollary.
\end{proof}

The proof of \Cref{linearstabilitycor} follows similarly.

\section*{Acknowledgements}
The first author would like to thank Anne de Roton and Pablo Candela for focusing his attention on this problem.

\bibliographystyle{alpha}
\bibliography{references}

\end{document}